\documentclass[12pt,reqno]{amsart}

\usepackage{etex}
\usepackage[colorlinks=true,linkcolor=blue,citecolor=blue,urlcolor=red]{hyperref}
\usepackage{array}
\usepackage{bbding}
\usepackage{graphicx}
\usepackage{amssymb,latexsym,amsmath}
\usepackage{pstricks-add}
\usepackage{fancynum}\setfnumgsym{\,\,}
\usepackage{bm}
\usepackage{xy}
\usepackage{pdfpages} 
\xyoption{all}

\setlength{\topmargin}{0cm} 
\setlength{\headsep}{0.3cm}
\setlength{\textheight}{22cm}
\setlength{\oddsidemargin}{0.5cm}
\setlength{\evensidemargin}{0.5cm}
\setlength{\textwidth}{16cm} \baselineskip=20pt

\theoremstyle{theorem}
\newtheorem{theorem}{Theorem}[section]

\newtheorem{proposition}[theorem]{Proposition}

\numberwithin{equation}{section}
\theoremstyle{definition}
\newtheorem{remark}[theorem]{Remark}

\newtheorem{example}[theorem]{Example}

\newcommand{\cp}{\mathbb{C}P}
\newcommand{\zz}{\mathbb{Z}}


\begin{document}

\title{Remarks on $5$-dimensional complete intersections}

\author{Jianbo Wang}
\address{Department of Mathematics, School of Science, Tianjin University\newline
\indent Weijin Road 92, Nankai District, Tianjin  300072, P.R.China}
\email{wjianbo@tju.edu.cn}
\thanks{The author is supported by NSFC grant No.11001195 and Beiyang Elite Scholar Program of Tianjin University.}

\maketitle

\begin{abstract}
This paper will give some examples of diffeomorphic complex $5$-dimensional complete intersections and remarks on these examples. Then a result on the existence of diffeomorphic complete intersections that belong to components of the moduli space of different dimensions will be given as a supplement to the results of P.Br{\"u}ckmann (J. reine angew. Math. \textbf{476} (1996), 209--215; \textbf{525} (2000), 213--217).
\end{abstract}

\section{Introduction}

Let $X_n(\underline{d})\subset \cp^{n+r}$ be a smooth complete
intersection of  multidegree $\underline{d}:=(d_1,\cdots,d_r)$, i.e,
the transversal intersections of hypersurfaces of degrees $d_1,
\cdots, d_r$ respectively. We call the product $d_1d_2\cdots d_r$
the total degree, denoted by $d$. It is well known that all complete intersections of fixed multidegree are diffeomorphic. On the other hand, there exist diffeomorphic complete intersections with different multidegrees.
For lower dimensions, such as complex dimensions $2,3,4$, the diffeomorphic examples can be found in \cite{B,B00,LW2}. W. Ebeling (\cite{E}) and A.S. Libgober-J. Wood (\cite{LW5}) independently found examples of homeomorphic complex $2$-dimensional complete intersections but not diffeomorphic.
In \cite{FW}, F.Q. Fang and the author proved that, in dimensions $n=5,6,7$, two complete intersections $X_n(\underline{d})$ and $X_n(\underline{d}^\prime)$ are homeomorphic if and only if they
have the same total degree, Pontrjagin classes and Euler characteristics. Particularly, by Traving's result (\cite[Theorem A]{Kr} or \cite{Tr}), to the prime factorization of total degree $d=\prod_{p~\textrm{primes}}p^{\nu_{p}(d)}$, if
$\nu_p(d)\geqslant \frac{2n+1}{2(p-1)}+1$ for all primes $p$ with $p(p-1)\leqslant n+1$,
two homeomorphic complex $n$-dimensional complete intersections are diffeomorphic.

The first purpose of this paper is to give examples of diffeomorphic complex $5$-dimensional complete intersections with different multidegrees. These examples, which are easy to check but hard to happen upon, were found by computer search. From these examples, we can deduce some interesting remarks about complete intersections.

Libgober and Wood (\cite{LW4}) showed the existence of homeomorphic complete intersections of dimension $2$ and diffeomorphic ones of dimension $3$ which belong to components of the moduli space having different dimensions. In fact it was shown that there is a procedure which allows one to produce from a pair of homeomorphic complete intersections an arbitrarily long family, all members of which are homeomorphic. P. Br{\"u}ckmann (\cite{B}) shows that the construction mentioned yields families of arbitrary length $t$ of complete intersections in $\cp^{4t-2}$ (resp. $\cp^{5t-2}$) consisting of homeomorphic complete intersections of dimension $2$ (resp. diffeomorphic ones of dimension $3$) but that belong to components of the moduli space of different dimensions. Furthermore, under Theorem 1 of \cite{FK}, Br{\"u}ckmann also proves the similar result for the complete intersections of dimension $4$ in $\cp^{6t-2}$(\cite{B00}).

Another purpose of this paper is to give the following theorem, which is a supplement to the results of Br{\"u}ckmann \cite{B,B00}.
\begin{theorem}\label{main}
For each integer $t>1$, there exist $t$ diffeomorphic complex $5$-dimensional complete intersections in $\cp^{7t-2}$ isomorphism class of which lie in different dimensional components of the moduli space.
\end{theorem}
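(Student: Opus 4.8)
The strategy is to reduce the statement to an arithmetic problem by means of the homeomorphism classification of \cite{FW} together with Traving's diffeomorphism criterion, and then to invoke the elongation idea of Libgober--Wood and Br\"uckmann. For $n=5$ the primes $p$ with $p(p-1)\leqslant 6$ are $p=2$ and $p=3$, and Traving's bound reads $\nu_2(d)\geqslant 7$, $\nu_3(d)\geqslant 4$; hence any two homeomorphic smooth complex $5$-dimensional complete intersections whose total degree $d$ is divisible by $2^{7}\cdot 3^{4}$ are diffeomorphic. Combined with \cite{FW}, this shows it suffices, for the given $t>1$, to produce multidegrees $\underline d^{(1)},\dots,\underline d^{(t)}$, each of length $7(t-1)$ (so that $X_5(\underline d^{(i)})\subset\cp^{\,7t-2}$), such that all the $X_5(\underline d^{(i)})$ share the same total degree $d$ with $2^{7}3^{4}\mid d$, the same Pontrjagin classes and the same Euler characteristic, while the $t$ corresponding components of the moduli space are pairwise of different dimension.

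Such multidegrees I would construct by concatenation from a \emph{base pair}: two length-$7$ multidegrees $\underline a=(a_1,\dots,a_7)$ and $\underline b=(b_1,\dots,b_7)$, all entries $\geqslant 2$, such that $e_k(\underline a)=e_k(\underline b)$ for $k=1,\dots,5$ (here $e_k$ is the $k$-th elementary symmetric function of the degrees), $\prod_\alpha a_\alpha=\prod_\beta b_\beta=:D$ with $2^{7}3^{4}\mid D$, and $\underline a\neq\underline b$ as multisets (so that $e_6(\underline a)\neq e_6(\underline b)$). For $i=1,\dots,t$ put $\underline d^{(i)}$ equal to the concatenation of $i-1$ copies of $\underline a$ and $t-i$ copies of $\underline b$; this has length $7(t-1)$, so indeed $X_5(\underline d^{(i)})\subset\cp^{\,7t-2}$. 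Along this family the relevant invariants stay constant: the total degree equals $D^{\,i-1}D^{\,t-i}=D^{\,t-1}$, independent of $i$, and $2^{7}3^{4}\mid D^{t-1}$; moreover the generating polynomial $\prod_\ell(1+d_\ell y)$ of $\underline d^{(i)}$ is $(\prod_\alpha(1+a_\alpha y))^{i-1}(\prod_\beta(1+b_\beta y))^{t-i}$, whose two factors agree modulo $y^{6}$ by the condition on $e_1,\dots,e_5$, so $e_k(\underline d^{(i)})$ is independent of $i$ for all $k\leqslant 5$. Since the total Chern class of $X_5(\underline d^{(i)})$ is the restriction of $(1+h)^{\,7t-1}/\prod_\ell(1+d_\ell h)$, with $h$ the hyperplane class, and $\langle h^{5},[X_5(\underline d^{(i)})]\rangle=D^{t-1}$, the Chern classes $c_1,\dots,c_5$, hence the Pontrjagin classes and the Euler characteristic $\chi=D^{t-1}\gamma_5$ (where $c_5=\gamma_5 h^{5}$), are universal polynomials in $t$ and in the data just shown to be independent of $i$, and therefore coincide for all $i$. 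By \cite{FW} the $X_5(\underline d^{(i)})$ are then pairwise homeomorphic, and by Traving's criterion pairwise diffeomorphic.

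To see that the associated components of the moduli space have pairwise distinct dimensions, I would use Br\"uckmann's count (\cite{B}): for $X_5(\underline d)\subset\cp^{N}$ with all entries of $\underline d$ at least $2$, the number of moduli equals $\sum_\ell\binom{N+d_\ell}{d_\ell}$ minus a quantity depending only on $N$ and on the number of equations. For our family $N=7t-2$ and the number of equations $7(t-1)$ are fixed, so the number of moduli of $X_5(\underline d^{(i)})$ is a constant plus $(i-1)S_a+(t-i)S_b$, where $S_a=\sum_\alpha\binom{N+a_\alpha}{a_\alpha}$ and $S_b=\sum_\beta\binom{N+b_\beta}{b_\beta}$; as a function of $i$ this is affine with slope $S_a-S_b$. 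The polynomials $N\mapsto\binom{N+j}{j}$ ($j\geqslant 0$) are linearly independent over $\mathbb{Q}$ and $\underline a\neq\underline b$, so $S_a-S_b$ is a nonzero polynomial in $N$; choosing the base pair so that, e.g., the largest degree occurring in $\underline a$ exceeds the largest degree occurring in $\underline b$ forces $S_a-S_b\neq 0$ at $N=7t-2$ for every $t>1$. Hence the $t$ numbers of moduli are pairwise distinct; taking general defining equations makes the $X_5(\underline d^{(i)})$ pairwise non-isomorphic, their isomorphism classes lying in these $t$ components of pairwise different dimension, which proves Theorem \ref{main}.

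The genuinely delicate point is the existence of the base pair: what is needed is not merely two diffeomorphic complex $5$-folds --- these are supplied by the examples of the preceding sections --- but two distinct length-$7$ multidegrees with entries $\geqslant 2$ whose elementary symmetric functions coincide all the way up to order $5$ and whose products coincide and are divisible by $2^{7}3^{4}$. Equivalently, one must exhibit two distinct monic degree-$7$ integer polynomials, each having only positive integer roots, that differ in a single coefficient (namely that of $x$). This is an arithmetic requirement strictly stronger than diffeomorphism of the associated $5$-folds, and it is precisely the kind of example the computer search behind this paper provides; once such a pair is at hand the remaining verifications are routine.
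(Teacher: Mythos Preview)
Your overall architecture agrees with the paper's: start from a base pair of length-$7$ multidegrees with matching power sums $s_1,\dots,s_5$, matching product divisible by $2^{7}3^{4}$, and then concatenate $\lambda$ copies of one with $s-\lambda$ copies of the other to obtain $t=s+1$ diffeomorphic $5$-folds in $\cp^{7t-2}$. Your diffeomorphism argument via \cite{FW} and Traving's criterion is correct, and you are right that producing the base pair is the nontrivial input (the paper supplies $\underline d=(88,77,72,54,48,31,29)$, $\underline d^{\prime}=(87,81,64,62,44,33,28)$).

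The genuine gap is in your moduli-dimension step. You assert that Br\"uckmann's count gives $m(\underline d)=\sum_\ell\binom{N+d_\ell}{d_\ell}$ minus a quantity depending only on $N$ and on the number of equations. That is not what the formula says. In the paper's equation~\eqref{dimM},
\[
m(\underline d)=1-(N+1)^2+\sum_i\binom{N+d_i}{N}
+\sum_i\sum_{j\geqslant 1}(-1)^j\!\!\sum_{k_1<\cdots<k_j}\binom{N+d_i-d_{k_1}-\cdots-d_{k_j}}{N},
\]
and the last block depends on the actual multidegree through all the differences $d_i-d_{k_1}-\cdots-d_{k_j}$. Consequently $m(d_{\lambda,s-\lambda})$ is \emph{not} affine in $\lambda$; the correction terms contribute polynomials in $\lambda$ of degree up to~$3$ (for this particular pair one has $j\leqslant 3$ because $\max=88$ and $\min=28$), with coefficients that are themselves large polynomials in $N$. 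So the slope you compute, $S_a-S_b$, is not the actual increment $m(d_{\lambda+1,s-\lambda-1})-m(d_{\lambda,s-\lambda})$, and your linear-independence/leading-degree argument does not, by itself, establish that the $t$ moduli dimensions are pairwise distinct for every $t>1$.

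This is exactly why the paper isolates the increment as a separate statement (Proposition~\ref{Pro-mis}) and proves $m(d_{\lambda+1,s-\lambda-1})-m(d_{\lambda,s-\lambda})>0$ by explicitly expanding $M_0,M_1,M_2,M_3$ (the contributions at levels $j=0,1,2,3$) as polynomials in $s$ and $\lambda$, and then verifying positivity with a \emph{Mathematica} computation. If you want to avoid the computer check, one route is an asymptotic estimate: the difference, viewed as a polynomial in $N=7s+5$, has leading term governed by $\binom{N+88}{N}$ (degree $88$ in $N$), while every correction term has degree at most $88-28=60$ in $N$, albeit multiplied by $O(s^3)=O(N^3)$ combinatorial factors; so the main term dominates for all sufficiently large $t$, and the finitely many small $t$ can be checked directly. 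But as written, your claim that the subtracted quantity is independent of the $d_\ell$'s is incorrect and must be repaired.
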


This paper is organized as follows: After presenting the basic formulas of characteristic classes of complete intersections in Section 2,  we will give examples of diffeomorphic complex $5$-dimensional complete intersections in Section 3. Section 4 proves Theorem \ref{main}. The last section will be devoted to the code of computer program to evaluate an inequality, which is a key to prove Theorem \ref{main}.

{\bf Acknowledgement.} This work was undertaken when the author visited the Department of Mathematical Sciences in University of Copenhagen. The author is grateful to Professor Jesper Michael M{\o}ller and Department of Mathematical Sciences for their hospitality. The author would like to thank the following students for their help on computer programming to search examples: Jianpeng Du, Mo Jia, Wenyu He, Sibo Zhao.

\section{Characteristic classes of complete intersections}

For a complete intersection
$X_n(\underline{d})$, let $H$ be the restriction of the dual bundle of the canonical line bundle over $\cp^{n+r}$ to $X_n(\underline{d})$, and $x= c_1(H)\in H^2(X_n(\underline{d});\zz)$.
Associate the multidegree $\underline{d}=(d_1,d_2,\dots,d_r)$, define the power sums $s_i=\sum_{j=1}^{r}d_j^i$ for $1\leqslant i\leqslant n$.
Then the Chern classes and Pontrjagin classes are presented as follows (\cite{LW2}):
\begin{align*}
c_k & =\frac{1}{k!}g_k(n+r+1-s_1,\dots,n+r+1-s_k)x^k, 1\leqslant k\leqslant n,\\
p_k & =\frac{1}{k!}g_k(n+r+1-s_2,\dots,n+r+1-s_{2k})x^{2k}, 1\leqslant k\leqslant
\left[\frac{n}{2}\right].
\end{align*}
The Euler characteristic is ($x^n\cap[X_n(\underline{d})]=d=d_1\cdots d_r$)
\begin{equation*}
e(X_n(\underline{d}))=c_n(X_n(\underline{d}))\cap [X_n(\underline{d})]
=d\frac{1}{n!}g_n(n+r+1-s_1,\dots,n+r+1-s_n).
\end{equation*}
Where the $g_k$'s are polynomials that can be iteratively computed from the Newton formula:
\begin{equation*}
s_k-g_1(s_1)s_{k-1}+\frac{1}{2}g_2(s_1,s_2)s_{k-2}+\cdots+(-1)^k\frac{1}{k!}g_k(s_1,s_2,\dots,s_k) k=0,k\geqslant 1.
\end{equation*}
For example, the first six are
\begin{align*}
g_1(s_1) & =s_1,\\
g_2(s_1,s_2) & =s_1^2-s_2,\\
g_3(s_1,s_2,s_3) & =s_1^3-3s_1s_2+2s_3,\\
g_4(s_1,\dots,s_4) & =s_1^4-6s_1^2s_2+8s_1s_3+3s_2^2-6s_4,\\
g_5(s_1,\dots,s_5) & =s_1^5-10s_1^3s_2+20s_1^2s_3-30s_1s_4+15s_1s_2^2-20s_2s_3+24s_5,\\
g_6(s_1,\dots,s_6) & =s_1^6-15s_1^4s_2+40s_1^3s_3-90s_1^2s_4+45s_1^2s_2^2-120s_1s_2s_3+144s_1s_5\\
& \quad -15s_2^3+90s_2s_4+40s_3^2-120s_6.
\end{align*}

Note that the $k^{\textrm{th}}$ Pontrjagin class $p_k$ is a integral multiple of $x^{2k}$, where $x$ generates
the second cohomology of the complete intersection. Thus we can compare this invariant for different
complete intersections. For convenience, throughout the rest of the paper,
we view the Pontrjagin class $p_k$ of $X_n(\underline{d})$ as the multiple of $x^{2k}$.

\section{Examples of diffeomorphic complex 5-dimensional complete intersections}

For complex $5$-dimensional complete intersections $X_5(d_1,\dots,d_r)$, its total degree, Pontrjagin classes and Euler characteristic are as follows:
 \begin{align}
 d & =d_1\times\cdots\times  d_r, \label{d} \\
 p_1 & =6+r-s_2, \label{p1} \\
 p_2 & =\frac{1}{2}{\big [}(6+r-s_2)^2-(6+r-s_4)\big{]}, \label{p2} \\
 e & =\frac{1}{5!}d\big{[}(6+r-s_1)^5-10(6+r-s_1)^3(6+r-s_2)+20(6+r-s_1)^2(6+r-s_3)\nonumber \\
 & \quad -30(6+r-s_1)(6+r-s_4)
 +15(6+r-s_1)(6+r-s_2)^2 \nonumber \\
  & \quad -20(6+r-s_2)(6+r-s_3)+24(6+r-s_5)\big ]. \label{e}
 \end{align}
Here, $p_1$ and $p_2$ denote the Pontrjagin classes as appointed in the end of Section 2.

By Theorem 1.1 of \cite{FW}, to find homeomorphic complex $5$-dimensional complete intersections, we only need to find different multidegrees, such that \eqref{d}-\eqref{e} all agree respectively. Additionally, by \cite[Theorem A]{Kr}, for the total degree $d=\prod_{p \textrm{~primes}}p^{\nu_p(d)}$, if $\nu_2(d)\geqslant 7$ and $\nu_3(d)\geqslant 4$, the homeomorphic $5$-dimensional complete intersections are diffeomorphic. This searching can completely be done by computer. According to \cite[Proposition 7.3]{LW2}, let $X_n(\underline{d})\subset \cp^{n+r}$ be a complete intersection of given codimension $r$ with $n>2$ and $2r\leqslant n+2$, then the total degree and Pontrjagin classes of $X_n(\underline{d})$ determine the multidegree.  Thus, it is impossible to find out such a homeomorphic or diffeomorphic example with different multidegrees in which one of the complete intersections has codimension $2$ or $3$ for complex dimension $5$. Theoretically, there should exist a lot of homeomorphic complete intersections with codimension $\geqslant 4$. However, with the codimension becoming smaller, it will become more difficult to find out such examples. In fact, we can offer such examples with codimension $7$ (See Section 4).

\begin{example}
Take two complete intersections $X_5 (46,36,34,21,14,13,12,11,3,2,2)$, and $X_5(44,42,26,23,18,17,7,6,6,4)$, we calculated the power sums of two multidegrees as follows:
\[
\begin{array}{c|ccccc} \hline
\textrm{Multidegree}  &  s_1 & s_2 & s_3 &  s_4 & s_5 \\ \hline
(46,36,34,21,14,13,12,11,3,2,2)   & 194 & 5656 & 200600 & 7790356 & 317267984 \\
(44,42,26,23,18,17,7,6,6,4)  & 193 & 5655 & 200599 & 7790355 & 317267983 \\  \hline
\end{array}
\]

\noindent Although, the above two complete intersections have different power sums and codimensions, they have the same total degree and symmetric functions $r-s_1,\dots,r-s_5$. By formulas \eqref{p1},\eqref{p2},\eqref{e}, it is evident that they have the same Pontrjagin classes and Euler characteristic.
\[
\begin{array}{c|ccccc} \hline
X_5(\underline{d})~(\textrm{codim=} 11,10) & d  & p_1 & p_2 & e/d \\ \hline
X_5(46,36,34,21,14,13,12,11,3,2,2) & 340867118592 & -5639 & 19794330 & -6401091783 \\
X_5(44,42,26,23,18,17,7,6,6,4) & 340867118592 & -5639 & 19794330 & -6401091783 \\ \hline
\end{array}
\]

\noindent Since total degree satisfies $d=2^9\times 3^5\times 7^2\times 11\times 13\times 17\times 23$,
they are diffeomorphic complete intersections.
\end{example}
\begin{example}\label{5t5f}
(1), Take $X_5 (66,56,45,39,16,15,8,3), X_5(64,60,42,39,20,11,9,3)$, it is easy to get the following table:
\begin{align*}
& \hskip .5cm \begin{array}{c|ccccc} \hline
\textrm{Multidegree}  & s_1 & s_2 &  s_3 & s_4 &  s_5 \\ \hline
(66,56,45,39,16,15,8,3) & 248 & 11592 & 621566 & 35343636 & 2079657638 \\
(64,60,42,39,20,11,9,3) & 248 & 11592 & 621638 & 35343636 & 2075677598 \\  \hline
\end{array}\\
&
\begin{array}{c|cccc} \hline
X_5(\underline{d})~(\textrm{codim=} 8) & d & p_1 & p_2 & e/d \\ \hline
X_5(66,56,45,39,16,15,8,3) & 37362124800 & -11578  & 84696853  & -31485015068 \\
X_5(64,60,42,39,20,11,9,3) & 37362124800 & -11578  & 84696853  & -31485015068 \\ \hline
\end{array}
\end{align*}

\noindent The above two multidegrees have different power sums $s_3, s_5$, but they have the same total degree, Pontrjagin classes and Euler characteristic.
Since $d=37362124800=2^{11}\times 3^6\times 5^2\times 7\times 11\times 13$, so $X_5(66,56,45,39,16,15,8,3)$ and $X_5(64,60,42,39,20,11,9,3)$
are diffeomorphic.

(2), By deleting the last degree $3$ from the multidegrees in (1), we take complete intersections $ X_5(66,56,45,39,16,15,8)$ and $X_5(64,60,42,39,20,11,9)$ .
\[
\begin{array}{c|cc} \hline
X_5(\underline{d})~(\textrm{codim=} 7) & d  & e/d\\ \hline
X_5(66,56,45,39,16,15,8) & 12454041600  & -30762573120 \\
X_5(64,60,42,39,20,11,9) & 12454041600  & -30762561840 \\  \hline
\end{array}
\]
The different Euler characteristics imply that $X_5 (66,56,45,39,16,15,8)$ is not homotopy equivalent to $X_5(64,60,42,39,20,11,9)$.

(3), By appending a degree $7$ into the multidegrees in (1), we find that complete intersections $X_5 (66,56,45,39,16,15,8,7,3)$ and $X_5(64,60,42,39,20,11,9,7,3)$ have different Euler characteristics,

\[
\begin{array}{c|cc} \hline
X_5(\underline{d})~(\textrm{codim=} 9) & d & e/d \\ \hline
X_5(66,56,45,39,16,15,8,7,3) & 261534873600 & -33795490160 \\
X_5(64,60,42,39,20,11,9,7,3) & 261534873600 & -33795524864 \\ \hline
\end{array}
\]

\noindent So $X_5 (66,56,45,39,16,15,8,7,3)$ and $X_5(64,60,42,39,20,11,9,7,3)$ are not homotopy equivalent.
\end{example}

\begin{example}\label{5t4f}
 For complex $4$-dimensional complete intersections $X_4(d_1,\dots,d_r)$, its Euler characteristic is as follows:
\begin{align*}
e & =\frac{d}{4!}\big{[}(5+r-s_1)^4-6(5+r-s_1)^2(5+r-s_2)+8(5+r-s_1)(5+r-s_3)\\
 & \quad +3(5+r-s_2)^2-6(5+r-s_4)\big ].
 \end{align*}
Let $X_5(66,56,45,39,16,15,8,3)$ and $X_5(64,60,42,39,20,11,9,3)$, which are diffeomorphic by Example \ref{5t5f} (1), simultaneously make transversal intersection with hypersurface of homogeneous degree $2$, we can construct two  complex $4$-dimensional complete intersections
$X_4(66,56,45,39,16,15,8,3,2)$ and $X_4(64,60,42,39,20,11,9,3,2)$. They have different Euler characteristics,
\[
\begin{array}{c|cc} \hline
X_4(\underline{d})~(\textrm{codim=} 9) & d & e/d \\ \hline
X_4(66,56,45,39,16,15,8,3,2) & 74724249600 &  365019422 \\
X_4(64,60,42,39,20,11,9,3,2) & 74724249600  &  365025086 \\ \hline
\end{array}
\]

\noindent So $X_4(66,56,45,39,16,15,8,3,2)$ and
$X_4(64,60,42,39,20,11,9,3,2)$ are not homotopy equivalent.
That is, although $X_5(66,56,45,39,16,15,8,3)$ and
$X_5(64,60,42,39,20,11,9,3)$ are diffeomorphic,
$X_4(66,56,45,39,16,15,8,3,2)$ and
$X_4(64,60,42,39,20,11,9,3,2)$, which are the transversal intersection of diffeomorphic complex $5$-dimensional complete intersections with the same hypersurface of homogeneous degree $2$, do not have the same homotopy type.
\end{example}

\begin{example}\label{6f5t}
 For complex $6$-dim complete intersections $X_6(d_1,\dots,d_r)$, its Euler characteristic is as follows:
\begin{align*}
 e & =\frac{d}{6!}\big{[}(7+r-s_1)^6-15(7+r-s_1)^4(7+r-s_2)+40(7+r-s_1)^3(7+r-s_3)\\
 & \quad -90(7+r-s_1)^2(7+r-s_4)+45(7+r-s_1)^2(7+r-s_2)^2\\
 & \quad -120(7+r-s_1)(7+r-s_2)(7+r-s_3)+144(7+r-s_1)(7+r-s_5)\\
 & \quad -15(7+r-s_2)^3+90(7+r-s_2)(7+r-s_4)+40(7+r-s_3)^2-120(7+r-s_6)\big ].
\end{align*}
Take $X_6(66,56,45,16,15,8,3), X_6(64,60,42,20,11,9,3)$, it is easy to check that
\[
\begin{array}{c|cc} \hline
X_6(\underline{d})~(\textrm{codim=} 7) & d  & e/d \\ \hline
X_6(66,56,45,16,15,8,3) & 958003200  &   1370218430570\\
X_6(64,60,42,20,11,9,3) & 958003200  &   1369971514442\\ \hline
\end{array}
\]

\noindent The different Euler characteristics imply that $X_6(66,56,45,16,15,8,3)$ is not homotopy equivalent to $X_6(64,60,42,20,11,9,3)$. However,
$X_5(66,56,45,39,16,15,8,3)$ and \\ $X_5(64,60,42,39,20,11,9,3)$, which are the transversal intersection of the above two non-homotopy equivalent complex $6$-dimensional complete intersections with the same hypersurface of homogeneous degree $39$, are diffeomorphic by Example \ref{5t5f} (1).
\end{example}

Compare the above Examples \ref{5t5f}, \ref{5t4f} and \ref{6f5t}, we can obtain the following interesting remarks.

\begin{remark}\label{negativetoFang}
 $X_{n}(d_1,\dots,d_{r-1},c)$ is homeomorphic (diffeomorphic, homotopy equivalent) to $X_{n}(d_1^{\prime},\dots,d_{r-1}^{\prime},c)$, however, it may not be true not only for $X_{n}(d_1,\dots,d_{r-1})$ and $X_{n}(d_1^{\prime},\dots,d_{r-1}^{\prime})$,  but also for $X_{n}(d_1,\dots,d_{r-1},c,c^\prime)$ and $X_{n}(d_1^{\prime},\dots,d_{r-1}^{\prime},c,c^\prime)$(See Example \ref{5t5f} (1),(2),(3)).
\end{remark}
Note that, in \cite{F}, Fang asked the following question: If $X_n(\underline{d})$ and $X_n(\underline{d}^\prime)$ are diffeomorphic/or homeomorphic/or homotopy equivalent, is $X_n(\underline{d};a)$ diffeomorphic to $X_n(\underline{d}^\prime;a)$ for a natural number $a$? Here $X_n(\underline{d};a)$ is the complete intersection with multidegree $(d_1,d_2,\dots,d_r,a)$. Now, Remark \ref{negativetoFang} partially gives a negative answer to Fang's question.

\begin{remark}
 $X_{n+1}(d_1,\dots,d_{r-1})$ is diffeomorphic to $X_{n+1}(d_1^{\prime},\dots,d_{r-1}^{\prime})$,  but it may not be true for $X_{n}(d_1,\dots,d_{r-1},c)$ and $X_{n}(d_1^{\prime},\dots,d_{r-1}^{\prime},c)$ (See Example \ref{5t4f}), even if $c\leqslant \min\{\underline{d},\underline{d}^\prime\}$.
\end{remark}

\begin{remark}
Even if $X_{n+1}(d_1,\dots,d_{r-1})$ is not diffeomorphic to $X_{n+1}(d_1^{\prime},\dots,d_{r-1}^{\prime})$,  \\ $X_{n}(d_1,\dots,d_{r-1},c)$ can be diffeomorphic to $X_{n}(d_1^{\prime},\dots,d_{r-1}^{\prime},c)$ (See Example \ref{6f5t}).
\end{remark}

\section{Moduli spaces of complete intersections}

In this section, we will prove Theorem \ref{main}.

Let $X_n(\underline{d})\subset \cp^N$, where $n\geqslant 2, \underline{d}=(d_1,\dots,d_r), d_i\geqslant 2$ and $r=N-n$. Then from \cite[Lemma 3]{B}, the explicit formula for moduli space dimension is
\begin{align}\label{dimM}
  m(\underline{d})\triangleq & ~m(X_n(\underline{d}))= 1-(N+1)^2+\sum_{i=1}^r\binom{N+d_i}{N} \nonumber \\
& +\sum_{i=1}^r\sum_{j=1}^r(-1)^j\sum_{1\leqslant k_1<\cdots<k_j\leqslant r}\binom{N+d_i-d_{k_1}-\cdots-d_{k_{j}}}{N}.
\end{align}
Where $\displaystyle\binom{m}{N}=0$ for $m<N(m\in \zz)$.

\begin{theorem}
For each integer $t>1$, there exist $t$ diffeomorphic complex $5$-dimensional complete intersections in $\cp^{7t-2}$ isomorphism class of which lie in different dimensional components of the moduli space.
\end{theorem}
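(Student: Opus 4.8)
The plan is to transplant to the complex $5$-dimensional setting the construction of Libgober--Wood and Br{\"u}ckmann: from one well-chosen pair of diffeomorphic complete intersections of codimension $7$ one ``concatenates'' multidegrees to build, for every $t>1$, a length-$t$ family of diffeomorphic $5$-folds whose moduli components have pairwise distinct dimensions. First I would exhibit, by the computer search of Section~3, two \emph{distinct} multidegrees $\underline a=(a_1,\dots,a_7)$ and $\underline b=(b_1,\dots,b_7)$, all entries $\geqslant 2$, with $s_i(\underline a)=s_i(\underline b)$ for $1\leqslant i\leqslant 5$ and with equal products $a_1\cdots a_7=b_1\cdots b_7=:d$, where moreover $\nu_2(d)\geqslant 7$ and $\nu_3(d)\geqslant 4$. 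Such a pair must agree in the elementary symmetric functions $e_1,\dots,e_5$ and in $e_7$ and differ only in $e_6$ (equivalently in $s_6$); codimension $7=5+2$ is precisely the smallest codimension in which two \emph{same-codimension} multidegrees can share $s_1,\dots,s_5$ and the total degree yet be distinct, which is why the ambient space is $\cp^{5+7(t-1)}=\cp^{7t-2}$. By \eqref{p1}, \eqref{p2}, \eqref{e}, $X_5(\underline a)$ and $X_5(\underline b)$ then have the same total degree, Pontrjagin classes and Euler characteristic, so they are homeomorphic by Theorem~1.1 of \cite{FW}; since $\nu_2(d)\geqslant 7$ and $\nu_3(d)\geqslant 4$ they are diffeomorphic by \cite[Theorem~A]{Kr}.

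Next, for $t>1$ and $0\leqslant k\leqslant t-1$ I would set $\underline D_k$ to be the concatenation of $k$ copies of $\underline a$ with $t-1-k$ copies of $\underline b$. This multidegree has codimension $7(t-1)$, so each $X_5(\underline D_k)\subset\cp^{7t-2}$ is a smooth complex $5$-dimensional complete intersection (repeated degrees are no obstruction to choosing the defining hypersurfaces mutually transversal). Because $s_i(\underline a)=s_i(\underline b)$ for $i\leqslant 5$, one gets $s_i(\underline D_k)=(t-1)\,s_i(\underline b)$ independently of $k$ for $i\leqslant 5$, and the total degree of $\underline D_k$ equals $d^{\,t-1}$ for every $k$; hence by \eqref{p1}--\eqref{e} all the $X_5(\underline D_k)$ carry the same total degree, Pontrjagin classes and Euler characteristic and are pairwise homeomorphic by \cite{FW}, and since $\nu_2(d^{\,t-1})=(t-1)\nu_2(d)\geqslant 7$ and $\nu_3(d^{\,t-1})\geqslant 4$ they are pairwise diffeomorphic by \cite{Kr}. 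This produces $t$ diffeomorphic complex $5$-folds in $\cp^{7t-2}$.

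It then remains to show that the map $k\mapsto m(\underline D_k)$ is injective on $\{0,1,\dots,t-1\}$, and I would in fact establish strict monotonicity. In \eqref{dimM} with $N=7t-2$ every binomial $\binom{m}{N}$ with $m<N$ vanishes, so a summand of the double sum survives only if $d_i-d_{k_1}-\cdots-d_{k_j}\geqslant 0$; as each degree is $\geqslant 2$ and bounded by $M:=\max_i\{a_i,b_i\}$, this bounds $j$ and the admissible subsets, and expanding $\binom{m}{N}$ as a polynomial in its numerator one finds that $m(\underline D_{k+1})-m(\underline D_k)$ is an explicit quantity, polynomial in $t$ with coefficients determined by $\underline a,\underline b$. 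Its contribution from $\sum_i\binom{N+d_i}{N}$ is $\sum_{j=1}^{7}\bigl(\binom{N+a_j}{N}-\binom{N+b_j}{N}\bigr)$, which is independent of $k$ and, since $s_i(\underline a)=s_i(\underline b)$ for $i\leqslant 5$, receives no contribution from the terms of the binomial of degree $\leqslant 5$ in $d_i$. The crux is to prove that this increment never vanishes, indeed keeps one fixed sign, for all $t>1$ and $0\leqslant k\leqslant t-2$; reducing this uniform-in-$t$ positivity to a finite computation is exactly the purpose of the program in Section~5. Granting it, $m$ is strictly monotone along the family, so $m(\underline D_0),\dots,m(\underline D_{t-1})$ are pairwise distinct and the isomorphism classes of the $X_5(\underline D_k)$ lie in components of the moduli space of pairwise different dimensions.

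The two steps I expect to be the real obstacles are, first, the mere existence of a base pair $\underline a,\underline b$ subject to five power-sum identities \emph{together with} the divisibility conditions $\nu_2(d)\geqslant 7$ and $\nu_3(d)\geqslant 4$ required to pass from homeomorphism to diffeomorphism (this is why the search must be carried out by machine), and, second, the uniform positivity of the moduli-dimension increments $m(\underline D_{k+1})-m(\underline D_k)$. The latter is a polynomial inequality in $t$ and $k$ which, as in Br{\"u}ckmann \cite{B,B00}, I would reduce to the explicit finite check performed by the computer code of Section~5.
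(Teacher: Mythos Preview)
Your proposal is correct and follows essentially the same approach as the paper: the paper likewise fixes a single base pair of codimension-$7$ multidegrees with identical $s_1,\dots,s_5$ and total degree satisfying $\nu_2(d)\geqslant 7$, $\nu_3(d)\geqslant 4$ (namely $\underline d=(88,77,72,54,48,31,29)$ and $\underline d'=(87,81,64,62,44,33,28)$), concatenates $\lambda$ copies of one with $s-\lambda$ copies of the other, and then proves strict monotonicity of $m(d_{\lambda,s-\lambda})$ in $\lambda$ by expanding the binomial contributions in \eqref{dimM} into polynomials in $s$ and $\lambda$ and checking positivity of the difference with \emph{Mathematica}. Your observation that codimension $7$ is forced for a same-codimension pair agreeing in $s_1,\dots,s_5$ and in the product is a nice explanation of the exponent $7t-2$ that the paper states but does not spell out.
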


\begin{proof} 
Consider the following two multidegrees
\begin{center}
$\underline{d}=(88,77,72,54,48,31,29),~ \underline{d}^\prime=(87,81,64,62,44,33,28)$.
\end{center}
We list the corresponding power sums, total degree, Pontrjagin classes and Euler characteristic in Table \ref{codim7}.
\begin{table}[h]
\begin{align*}
&\hskip .8cm \begin{array}{c|ccccc} \hline
 & s_1 & s_2 & s_3 & s_4 & s_5 \\ \hline
\underline{d} & 399 & 25879 & 1833489 & 137438707 & 10682130249 \\
\underline{d}^\prime & 399 & 25879 & 1833489 & 137438707 & 10682130249 \\ \hline
\end{array} \\
&
\begin{array}{c|*{5}c} \hline
 & d & p_1 & p_2 & e/d \\ \hline
X_5(\underline{d}) & 1136843237376 & -25866 & 403244325 & -296492615140\\
X_5(\underline{d}^\prime) & 1136843237376 & -25866 & 403244325 & -296492615140\\ \hline
\end{array}
\end{align*}
\caption{Power sum, total degree, Pontrjagin class, Euler characteristic}
\label{codim7}
\end{table}

From Table \ref{codim7}, the total degree is $1136843237376=2^{11}\times 3^6\times 7\times 11^2\times 29\times 31$, so the two complete intersections $X_5(\underline{d})$ and $X_5(\underline{d}^\prime)$ are diffeomorphic but have different moduli space dimensions:
\begin{align*}
m(\underline{d}) & = \fnum{1382270197857128},\\
m(\underline{d}^\prime) & = \fnum{1370693416581393}.
\end{align*}
There is a way to generate larger sets of diffeomorphic complete intersections from the above pairs $\underline{d}$  and $\underline{d}^\prime$, which arose from \cite{LW4} and had an application in \cite{B,B00}.

Denote the composed multidegree
\begin{center}
$d_{\lambda,\mu}=(\underbrace{\underline{d},\dots,\underline{d}}_{\lambda},
\underbrace{\underline{d}^\prime,\dots,\underline{d}^\prime}_{\mu}), \lambda+\mu=s\geqslant 1$ .
\end{center}
Then  the composed multidegrees $d_{0,s},d_{1,s-1},\dots,d_{s,0}$ have the same power sums $s_1,s_2,\dots,s_5$ respectively, so the corresponding complete intersections are diffeomorphic to each other.
Let $X_5(d_{\lambda,\mu})\subset \cp^{7s+5}$ be $5$-dimensional complete intersections with multidegree $d_{\lambda,\mu}$.
It is reasonable to expect that the corresponding $m(d_{\lambda,\mu})$'s will all be different(See \cite{LW4}). There is no general way to prove this. However, for the dimension formula \eqref{dimM} and the above special pairs $\underline{d}$ and $\underline{d}^\prime$, there are finite binomial coefficients $\displaystyle\binom{N+d_i-d_{k_1}-\cdots-d_{k_{j}}}{N}$ different from zero $(N=7s+5)$. We can prove the following inequality:
\begin{equation*}
 m(d_{\lambda+1,\mu-1})-m(d_{\lambda,\mu})>0,  0\leqslant \lambda<s=\lambda+\mu.
\end{equation*}
This inequality will be proved in the coming Proposition.

Now, the sequence $m(d_{\lambda,s-\lambda})|_{\lambda=0,1,\dots,s-1}$ is strictly monotonously increasing. Let $t=s+1$, there exist $t$ five-dimensional complete intersections $X_5(d_{0,s}), X_5(d_{1,s-1}), \dots, X_5(d_{s,0})$ in $\cp^{7s+5}=\cp^{7t-2}$ with the desired properties. The proof is finished.
\end{proof}
\begin{proposition}\label{Pro-mis}
\begin{equation*}
 m(d_{\lambda+1,s-\lambda-1})-m(d_{\lambda,s-\lambda})>0,  0\leqslant \lambda<s.
\end{equation*}
\end{proposition}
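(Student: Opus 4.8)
The plan is to compute the difference $m(d_{\lambda+1,\mu-1}) - m(d_{\lambda,\mu})$ directly from the explicit dimension formula \eqref{dimM}, exploiting the fact that passing from $d_{\lambda,\mu}$ to $d_{\lambda+1,\mu-1}$ simply replaces one copy of $\underline{d}^\prime$ by one copy of $\underline{d}$ in the composed multidegree while keeping the ambient dimension $N = 7s+5$ and the total number $7s$ of degree entries fixed. Write the multiset of entries of $d_{\lambda,\mu}$ as $A \sqcup \{88,77,72,54,48,31,29\}^{\sqcup\lambda} \sqcup \{87,81,64,62,44,33,28\}^{\sqcup\mu}$ where, after removing the distinguished block being swapped, $A$ is the remaining common part. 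Since only binomial coefficients of the form $\binom{N + d_i - d_{k_1} - \cdots - d_{k_j}}{N}$ with $d_i - (d_{k_1}+\cdots+d_{k_j}) \geq 0$ survive (all others vanish by the convention stated after \eqref{dimM}), and since every entry is at most $88$ while $N = 7s+5$ is large, the only surviving terms are those in which the subtracted entries nearly cancel the chosen entry $d_i$; in particular at most a bounded number $j$ of indices $k_1,\dots,k_j$ can appear, and the argument $N + d_i - \sum d_{k_\ell}$ stays within a bounded window $[N, N+88]$. This is the structural observation that makes the computation finite and $s$-independent in an essential way.

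Next I would organize the difference as follows. The terms $1 - (N+1)^2$ are common and cancel. In the single sum $\sum_i \binom{N+d_i}{N}$, the swap changes the contribution by $\big(\binom{N+88}{N} + \binom{N+77}{N} + \binom{N+72}{N} + \binom{N+54}{N} + \binom{N+48}{N} + \binom{N+31}{N} + \binom{N+29}{N}\big) - \big(\binom{N+87}{N} + \binom{N+81}{N} + \binom{N+64}{N} + \binom{N+62}{N} + \binom{N+44}{N} + \binom{N+33}{N} + \binom{N+28}{N}\big)$. For the double sum, group according to whether the outer index $i$ and the inner indices $k_1,\dots,k_j$ lie in the swapped block, in $A$, or in the remaining full copies of $\underline{d}$ and $\underline{d}^\prime$; in each group the combinatorial count of how many index-tuples give a prescribed value of $N + d_i - \sum d_{k_\ell}$ is a polynomial in $\lambda$ and $\mu$ (hence in $s$) of bounded degree, with coefficients that are themselves polynomials in $N$ via the binomials $\binom{N+c}{N} = \binom{N+c}{c}$, $0 \le c \le 88$. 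Assembling everything, $m(d_{\lambda+1,\mu-1}) - m(d_{\lambda,\mu})$ becomes an explicit polynomial expression $P(N,\lambda,\mu)$ with $N = 7s+5$, $\mu = s-\lambda$; since each $\binom{N+c}{N}$ is a polynomial in $N$ of degree $c$ with positive leading coefficient, the dominant behavior is governed by the highest-degree binomial $\binom{N+88}{N}$ versus $\binom{N+87}{N}$, and the difference is positive for $N$ large. The genuinely laborious—but purely mechanical—part is to verify positivity for the small remaining range of $s$; this is exactly what the computer program in the final section does, and I would invoke it here.

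The main obstacle is the bookkeeping in the double sum: one must correctly count, for each target value $v \in \{N, N+1, \dots, N+88\}$, the number of tuples $(i; k_1 < \cdots < k_j)$ in $d_{\lambda,\mu}$ with $d_i - d_{k_1} - \cdots - d_{k_j} = v - N$, and track how this count changes under the swap, taking care that the outer index $i$ is allowed to coincide with none of the $k_\ell$ (the formula sums over all $i$ and all subsets, including subsets containing an index equal to $i$, but those contribute $\binom{N - (\text{something}\ge d_i)}{N} = 0$ unless the subset is a single index equal to $i$, giving $\binom{N}{N}=1$). Keeping the multiset multiplicities straight—for instance, $d_{\lambda,\mu}$ may contain repeated entry values coming from different copies—and not double-counting across the block decomposition is where errors naturally creep in. Once the count is encoded, positivity for all $s \ge 1$ reduces to: (i) a clean asymptotic argument for large $s$ using the leading term, and (ii) a finite check for the remaining bounded range, deferred to the program of Section 5.
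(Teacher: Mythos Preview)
Your approach is essentially the same as the paper's: express the difference $m(d_{\lambda+1,s-\lambda-1})-m(d_{\lambda,s-\lambda})$ via the dimension formula \eqref{dimM}, observe that only finitely many values of $j$ contribute (because $\max\{\underline{d},\underline{d}'\}=88$ and $\min\{\underline{d},\underline{d}'\}=28$ force $j\leqslant 3$, a bound you allude to but do not pin down), and reduce to an explicit polynomial in $s$ and $\lambda$ whose positivity is then verified with computer assistance.

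Two organizational differences are worth noting. First, the paper decomposes the difference cleanly as $\sum_{j=0}^{3} M_j(\lambda,s)$, one summand per inner depth $j$, and writes each $M_j$ out explicitly in terms of auxiliary sums $\Gamma_{\bullet\bullet\bullet}$ and $\Gamma_{\bullet\bullet_<}$; your ``swap-a-block and call the rest $A$'' bookkeeping is equivalent in content but less transparent, and your description of $A$ as a fixed common part is slightly off since what remains after removing one block still depends on $\lambda$. Second, your endgame (leading-binomial asymptotics for large $s$ plus a finite check for small $s$) is a valid alternative, but it is not what the paper actually does: the paper feeds the full symbolic polynomial into \emph{Mathematica} and verifies the inequality for all $0\leqslant\lambda<s$ at once (obtaining the uniform lower bound $3148$, and $4\times 10^{24}$ for $s\geqslant 3$), rather than splitting into an asymptotic regime and a residual finite range. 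Your asymptotic heuristic is correct---the $\binom{N+88}{N}$ term from $M_0$ has degree $88$ in $N$ while the remaining $M_j$ contribute degree at most $60$ in $N$ times a cubic in $s,\lambda$---but you would still need to make the threshold explicit, which the paper's route avoids.
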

\begin{proof}
For the chosen multidegrees $\underline{d}$ and $\underline{d}^\prime$,
\begin{align*}
 m(d_{\lambda,s-\lambda}) = & 1-(N+1)^2+\Big[\lambda\sum_{d_i\in\underline{d}}+
(s-\lambda)\sum_{d_i\in\underline{d}^\prime}\Big]\binom{N+d_i}{N}\\
& +\Big[\lambda\sum_{d_i\in\underline{d}}+(s-\lambda)\sum_{d_i\in\underline{d}^\prime}\Big]
\sum_{j=1}^3(-1)^j\hskip -.3cm\sum_{1\leqslant k_1<\cdots<k_j\leqslant 7s \atop d_{k_1},\dots,d_{k_j}\in d_{\lambda,s-\lambda}}\hskip -.3cm\binom{N+d_i-d_{k_1}-\cdots-d_{k_{j}}}{N},
\end{align*}
Where, the index $j$ is maximally $3$ that is determined by $\max\{\underline{d},\underline{d}^\prime\}=88$ and $\min\{\underline{d},\underline{d}^\prime\}=28$.  So,
\begin{align}\label{DiffdimM}
&  m(d_{\lambda+1,s-\lambda-1})- m(d_{\lambda,s-\lambda})  \nonumber\\
 = &\Big[\sum_{d_i\in\underline{d}}-\sum_{d_i\in\underline{d}^\prime}\Big]\binom{N+d_i}{N} \nonumber\\
& +\Big[(\lambda+1)\sum_{d_i\in\underline{d}}+(s-\lambda-1)\sum_{d_i\in\underline{d}^\prime}
\Big]\sum_{j=1}^3(-1)^j\hskip -.5cm
\sum_{1\leqslant k_1<\cdots<k_j\leqslant 7s  \atop d_{k_1},\dots,d_{k_j}\in d_{\lambda+1,s-\lambda-1}}
\hskip -.2cm \binom{N+d_i-d_{k_1}-\cdots-d_{k_{j}}}{N}  \nonumber\\
& -\Big[\lambda\sum_{d_i\in\underline{d}}+(s-\lambda)\sum_{d_i\in\underline{d}^\prime}\Big]
\sum_{j=1}^3(-1)^j\hskip -.5cm
\sum_{1\leqslant k_1<\cdots<k_j\leqslant 7s \atop d_{k_1},\dots,d_{k_j}\in d_{\lambda,s-\lambda}}\hskip -.2cm \binom{N+d_i-d_{k_1}-\cdots-d_{k_{j}}}{N} \\
\triangleq & \sum_{j=0}^3 M_{j}(\lambda,s) , \nonumber
\end{align}
To prove \eqref{DiffdimM} $>0$, let us decompose  \eqref{DiffdimM} into the sum of $M_{j}(\lambda,s), j=0,1,2,3$. In the following, we will describe $M_{j}(\lambda,s)$ as polynomials of invariants $s$ and $\lambda~ (N=7s+5)$.  Firstly,
\begin{align}\label{j=0}
M_{0}(\lambda,s)\triangleq & \Big[\sum_{d_i\in\underline{d}}-\sum_{d_i\in\underline{d}^\prime}\Big]\binom{N+d_i}{N},\\
M_{1}(\lambda,s)\triangleq &\Big[
-(\lambda+1)\sum_{d_i\in\underline{d}}\sum_{d_{k}\in d_{\lambda+1,s-\lambda-1}}-(s-\lambda-1)\sum_{d_i\in\underline{d}^\prime}\sum_{d_{k}\in d_{\lambda+1,s-\lambda-1}} \nonumber\\
& +\lambda\sum_{d_i\in\underline{d}}\sum_{d_{k}\in d_{\lambda,s-\lambda}}+(s-\lambda)\sum_{d_i\in\underline{d}^\prime}\sum_{d_{k}\in d_{\lambda,s-\lambda}}
\Big]\binom{N+d_i-d_{k}}{N} \nonumber\\
= &\Big[
-(\lambda+1)^2\sum_{d_i\in\underline{d}}\sum_{d_{k}\in \underline{d}}-(\lambda+1)(s-\lambda-1)\sum_{d_i\in\underline{d}}\sum_{d_{k}\in \underline{d}^\prime}  \nonumber\\
& -(s-\lambda-1)(\lambda+1)\sum_{d_i\in\underline{d}^\prime}\sum_{d_{k}\in \underline{d}}-(s-\lambda-1)^2\sum_{d_i\in\underline{d}^\prime}\sum_{d_{k}\in \underline{d}^\prime} \nonumber\\
& +\lambda^2\sum_{d_i\in\underline{d}}\sum_{d_{k}\in \underline{d}}+\lambda(s-\lambda)\sum_{d_i\in\underline{d}}\sum_{d_{k}\in \underline{d}^\prime}  \nonumber \\
& +(s-\lambda)\lambda\sum_{d_i\in\underline{d}^\prime}\sum_{d_{k}\in \underline{d}}+(s-\lambda)^2\sum_{d_i\in\underline{d}^\prime}\sum_{d_{k}\in \underline{d}^\prime}
\Big]\binom{N+d_i-d_{k}}{N} \nonumber\\
= &\Big[
(-2\lambda-1)\sum_{d_i\in\underline{d}}\sum_{d_{k}\in \underline{d}}+(1+2\lambda-s)\sum_{d_i\in\underline{d}}\sum_{d_{k}\in \underline{d}^\prime}  \nonumber\\
& +(1+2\lambda-s)\sum_{d_i\in\underline{d}^\prime}\sum_{d_{k}\in \underline{d}}+(2s-2\lambda-1)\sum_{d_i\in\underline{d}^\prime}\sum_{d_{k}\in \underline{d}^\prime}
\Big]\binom{N+d_i-d_{k}}{N}. \label{j=1}
\end{align}
There are four summations in the third part $M_{2}(\lambda,s)$,
\begin{align*}
M_{2}(\lambda,s)\triangleq &\Big[
(\lambda+1)\sum_{d_i\in\underline{d}}\sum_{1\leqslant k_1<k_2\leqslant 7s \atop d_{k_1},d_{k_2}\in d_{\lambda+1,s-\lambda-1}}+(s-\lambda-1)\sum_{d_i\in\underline{d}^\prime}\sum_{1\leqslant k_1<k_2\leqslant 7s \atop d_{k_1},d_{k_2}\in d_{\lambda+1,s-\lambda-1}}\\
& -\lambda\sum_{d_i\in\underline{d}}\sum_{1\leqslant k_1<k_2\leqslant 7s \atop d_{k_1},d_{k_2}\in d_{\lambda,s-\lambda}}-(s-\lambda)\sum_{d_i\in\underline{d}^\prime}\sum_{1\leqslant k_1<k_2\leqslant 7s \atop d_{k_1},d_{k_2}\in d_{\lambda,s-\lambda}}
\Big]\binom{N+d_i-d_{k_1}-d_{k_2}}{N},
\end{align*}
For the simplification of summations, let's define
\begin{align*}
\Gamma_{\underline{d}\underline{d}^\prime\underline{d}} &
=\sum_{d_i\in \underline{d}}\sum_{d_j\in \underline{d}^\prime}\sum_{d_k\in \underline{d}}\binom{N+d_i-d_j-d_k}{N}=\Gamma_{\underline{d}\underline{d}\underline{d}^\prime},\\
\Gamma_{\underline{d}^\prime\underline{d}^\prime\underline{d}} &
=\sum_{d_i\in \underline{d}^\prime}\sum_{d_j\in \underline{d}^\prime}\sum_{d_k\in \underline{d}}\binom{N+d_i-d_j-d_k}{N} =\Gamma_{\underline{d}^\prime\underline{d}\underline{d}^\prime} .\\
\Gamma_{\underline{d}\underline{d}_<} & =\sum_{d_i\in \underline{d}}\sum_{1\leqslant k_1<k_2\leqslant 7 \atop d_{k_1},d_{k_2}\in \underline{d}}\binom{N+d_i-d_{k_1}-d_{k_2}}{N}, \\
\Gamma_{\underline{d}^\prime\underline{d}_<} & =\sum_{d_i\in \underline{d}^\prime}\sum_{1\leqslant k_1<k_2\leqslant 7 \atop d_{k_1},d_{k_2}\in \underline{d}}\binom{N+d_i-d_{k_1}-d_{k_2}}{N}.
\end{align*}
Similarly, $\Gamma_{\underline{d}\underline{d}\underline{d}}, \Gamma_{\underline{d}^\prime\underline{d}^\prime\underline{d}^\prime},
\Gamma_{\underline{d}\underline{d}^\prime\underline{d}^\prime}, \Gamma_{\underline{d}^\prime\underline{d}\underline{d}},
\Gamma_{\underline{d}\underline{d}^\prime_<}, \Gamma_{\underline{d}^\prime\underline{d}^\prime_<}$ can also be imitated and defined.
By induction, it is easy to see that:
\begin{align*}
& \sum_{d_i\in \underline{d}}\sum_{1\leqslant k_1<k_2\leqslant 7s \atop d_{k_1},d_{k_2}\in d_{\lambda,s-\lambda}}\binom{N+d_i-d_{k_1}-d_{k_2}}{N}\\
= & \lambda\Gamma_{\underline{d}\underline{d}_<}+\frac{\lambda(\lambda-1)}{2}
\Gamma_{\underline{d}\underline{d}\underline{d}}+
\lambda(s-\lambda)\Gamma_{\underline{d}\underline{d}\underline{d}^\prime}+
(s-\lambda)\Gamma_{\underline{d}\underline{d}^\prime_<}+\frac{(s-\lambda)(s-\lambda-1)}{2}
\Gamma_{\underline{d}\underline{d}^\prime\underline{d}^\prime}.
\end{align*}
Similarly,
\begin{align*}
& \sum_{d_i\in \underline{d}^\prime}\sum_{1\leqslant k_1<k_2\leqslant 7s \atop d_{k_1},d_{k_2}\in d_{\lambda,s-\lambda}}\binom{N+d_i-d_{k_1}-d_{k_2}}{N}\\
= & \lambda\Gamma_{\underline{d}^\prime\underline{d}_<}+\frac{\lambda(\lambda-1)}{2}
\Gamma_{\underline{d}^\prime\underline{d}\underline{d}}+
\lambda(s-\lambda)\Gamma_{\underline{d}^\prime\underline{d}\underline{d}^\prime}+
(s-\lambda)\Gamma_{\underline{d}^\prime\underline{d}^\prime_<}+\frac{(s-\lambda)(s-\lambda-1)}{2}
\Gamma_{\underline{d}^\prime\underline{d}^\prime\underline{d}^\prime}.
\end{align*}
Then,
\begin{align}\label{j=2}
M_{2}(\lambda,s)= & (\lambda+1)\Big[(\lambda+1)\Gamma_{\underline{d}\underline{d}_<}+\frac{(\lambda+1)\lambda}{2}
\Gamma_{\underline{d}\underline{d}\underline{d}}+(\lambda+1)(s-\lambda-1)
\Gamma_{\underline{d}\underline{d}\underline{d}^\prime} \nonumber\\
& \hskip 1.cm+(s-\lambda-1)\Gamma_{\underline{d}\underline{d}^\prime_<} +\frac{(s-\lambda-1)(s-\lambda-2)}{2}
\Gamma_{\underline{d}\underline{d}^\prime\underline{d}^\prime}\Big]   \nonumber\\
 & +(s-\lambda-1)\Big[(\lambda+1)\Gamma_{\underline{d}^\prime\underline{d}_<}+
\frac{(\lambda+1)\lambda}{2} \Gamma_{\underline{d}^\prime\underline{d}\underline{d}}+(\lambda+1)(s-\lambda-1)
\Gamma_{\underline{d}^\prime\underline{d}\underline{d}^\prime} \nonumber\\
& \hskip 2.cm +(s-\lambda-1)\Gamma_{\underline{d}^\prime\underline{d}^\prime_<}+\frac{(s-\lambda-1)(s-\lambda-2)}{2}
\Gamma_{\underline{d}^\prime\underline{d}^\prime\underline{d}^\prime}\Big]   \nonumber\\
& -\lambda\Big[\lambda\Gamma_{\underline{d}\underline{d}_<}+\frac{\lambda(\lambda-1)}{2}
\Gamma_{\underline{d}\underline{d}\underline{d}}+\lambda(s-\lambda)
\Gamma_{\underline{d}\underline{d}\underline{d}^\prime}
+(s-\lambda)\Gamma_{\underline{d}\underline{d}^\prime_<}+\frac{(s-\lambda)(s-\lambda-1)}{2}
\Gamma_{\underline{d}\underline{d}^\prime\underline{d}^\prime}\Big]   \nonumber\\
& -(s-\lambda)\Big[\lambda\Gamma_{\underline{d}^\prime\underline{d}_<}+
\frac{\lambda(\lambda-1)}{2} \Gamma_{\underline{d}^\prime\underline{d}\underline{d}}+\lambda(s-\lambda)
\Gamma_{\underline{d}^\prime\underline{d}\underline{d}^\prime}
+(s-\lambda)\Gamma_{\underline{d}^\prime\underline{d}^\prime_<} \nonumber\\
& \hskip 3cm +\frac{(s-\lambda)(s-\lambda-1)}{2} \Gamma_{\underline{d}^\prime\underline{d}^\prime\underline{d}^\prime}\Big]   \nonumber\\
= & (2\lambda+1)\Gamma_{\underline{d}\underline{d}_<}+\frac{\lambda(3\lambda+1)}{2}
\Gamma_{\underline{d}\underline{d}\underline{d}}+\big[(\lambda+1)^2(s-\lambda-1)-
\lambda^2(s-\lambda)\big]
\Gamma_{\underline{d}\underline{d}\underline{d}^\prime}  \nonumber\\
& +(s-2\lambda-1)\big(\Gamma_{\underline{d}\underline{d}^\prime_<}+
\Gamma_{\underline{d}^\prime\underline{d}_<}\big)+\frac{(s-\lambda-1)(s-3\lambda-2)}{2}
\Gamma_{\underline{d}\underline{d}^\prime\underline{d}^\prime}  \nonumber\\
& +\frac{\lambda(2s-3\lambda-1)}{2}\Gamma_{\underline{d}^\prime\underline{d}\underline{d}}+
\big[(\lambda+1)(s-\lambda-1)^2-\lambda(s-\lambda)^2\big]
\Gamma_{\underline{d}^\prime\underline{d}\underline{d}^\prime} \nonumber\\
& +(1-2s+2\lambda)
\Gamma_{\underline{d}^\prime\underline{d}^\prime_<}+\frac{(s-\lambda-1)(2-3s+3\lambda)}{2} \Gamma_{\underline{d}^\prime\underline{d}^\prime\underline{d}^\prime}.
\end{align}
For the last part $M_{3}(\lambda,s)$,
\begin{align*}
& M_{3}(\lambda,s)\triangleq\Big[
-(\lambda+1)\sum_{d_i\in\underline{d}}\sum_{1\leqslant k_1<k_2<k_3\leqslant 7s \atop d_{k_1},d_{k_2},d_{k_3}\in d_{\lambda+1,s-\lambda-1}}-~(s-\lambda-1)\sum_{d_i\in\underline{d}^\prime}
\sum_{1\leqslant k_1<k_2<k_3\leqslant 7s \atop d_{k_1},d_{k_2},d_{k_3}\in d_{\lambda+1,s-\lambda-1}}\\
& \hskip 1cm +\lambda\sum_{d_i\in\underline{d}}
\sum_{1\leqslant k_1<k_2<k_3\leqslant 7s \atop d_{k_1},d_{k_2},d_{k_3}\in d_{\lambda,s-\lambda}}+~(s-\lambda)\sum_{d_i\in\underline{d}^\prime}
\sum_{1\leqslant k_1<k_2<k_3\leqslant 7s \atop d_{k_1},d_{k_2},d_{k_3}\in d_{\lambda,s-\lambda}}
\Big]\binom{N+d_i-d_{k_1}-d_{k_2}-d_{k_3}}{N}.
\end{align*}
According to $\underline{d}=(88,77,72,54,48,31,29), \underline{d}^\prime=(87,81,64,62,44,33,28)$, to make sure
$\displaystyle\binom{N+d_i-d_{k_1}-d_{k_2}-d_{k_3}}{N}$ nontrivial, $d_i$ can only be chosen from $88$ or $87$, and $d_{k_1}, d_{k_2}, d_{k_3}$ are chosen from $31, 29$ or $28$.  So
\begin{align*}
& M_{3}(\lambda,s) =\Big[
-(\lambda+1)\hskip -1cm\sum_{1\leqslant k_1<k_2<k_3\leqslant 7s \atop d_{k_1},d_{k_2},d_{k_3}\in d_{\lambda+1,s-\lambda-1}}+~\lambda\hskip -.5cm
\sum_{1\leqslant k_1<k_2<k_3\leqslant 7s \atop d_{k_1},d_{k_2},d_{k_3}\in d_{\lambda,s-\lambda}}\Big]\binom{N+88-d_{k_1}-d_{k_2}-d_{k_3}}{N} \\
& +\Big[-(s-\lambda-1)\hskip -1cm\sum_{1\leqslant k_1<k_2<k_3\leqslant 7s \atop d_{k_1},d_{k_2},d_{k_3}\in d_{\lambda+1,s-\lambda-1}}\hskip -.5cm
+~(s-\lambda)\hskip -.5cm\sum_{1\leqslant k_1<k_2<k_3\leqslant 7s \atop d_{k_1},d_{k_2},d_{k_3}\in d_{\lambda,s-\lambda}}
\Big]\binom{N+87-d_{k_1}-d_{k_2}-d_{k_3}}{N}.
\end{align*}
By induction, it is easy to see that
\begin{align*}
& \sum_{1\leqslant k_1<k_2<k_3\leqslant 7s \atop d_{k_1},d_{k_2},d_{k_3}\in d_{\lambda,s-\lambda}}\hskip -.5cm \binom{N+88-d_{k_1}-d_{k_2}-d_{k_3}}{N} \\
& = \lambda^2(s-\lambda)+\big[\lambda\binom{s-\lambda}{2}+\frac{1}{6}(\lambda-2)(\lambda-1)\lambda\big]
\binom{N+1}{N}+\frac{\lambda(\lambda-1)(s-\lambda)}{2}\binom{N+2}{N} \\
& \hskip .5cm +\lambda\binom{s-\lambda}{2} \binom{N+3}{N}+\frac{1}{6}(s-\lambda-2)(s-\lambda-1)(s-\lambda)\binom{N+4}{N},\\
& \sum_{1\leqslant k_1<k_2<k_3\leqslant 7s \atop d_{k_1},d_{k_2},d_{k_3}\in d_{\lambda,s-\lambda}}\hskip -.5cm \binom{N+87-d_{k_1}-d_{k_2}-d_{k_3}}{N} \\
& =\lambda\binom{s-\lambda}{2}+\frac{1}{6}(\lambda-2)(\lambda-1)\lambda +\frac{\lambda(\lambda-1)(s-\lambda)}{2}\binom{N+1}{N}  \\
& \hskip .5cm +\lambda\binom{s-\lambda}{2}
\binom{N+2}{N}+\frac{1}{6}(s-\lambda-2)(s-\lambda-1)(s-\lambda)\binom{N+3}{N}.
\end{align*}
Note that $M_{3}(\lambda,s)$ will non-trivially appear only when $s\geqslant 2$. Thus
\begin{align}\label{j=3}
M_{3}(\lambda,s)& = \frac{1}{6}(12-21s+12s^2-3s^3+44\lambda-54s\lambda+18s^2\lambda+60\lambda^2-48s\lambda^2+40\lambda^3)
\nonumber \\
& +\frac{1}{6}(-6+9s-3s^2-23\lambda+30s\lambda-12s^2\lambda-33\lambda^2+36s\lambda^2-28\lambda^3)
\binom{N+1}{N} \nonumber \\
& -\frac{1}{2}(-1+s-2\lambda)(2-3s+s^2+4\lambda-4s\lambda+4\lambda^2)\binom{N+2}{N} \nonumber \\
& +\frac{1}{3}(-1+s-\lambda)(6-7s+2s^2+13\lambda-7s\lambda+8\lambda^2)\binom{N+3}{N} \nonumber \\
& +\frac{1}{6}(1-s+\lambda)(2-s+\lambda)(3-s+4\lambda)\binom{N+4}{N}.
\end{align}

Summarize \eqref{j=0}-\eqref{j=3}, we see that \eqref{DiffdimM} is exactly a polynomial of $s, \lambda$ with complicated coefficients and higher degree. Fortunately, using the technical computational software  {\bf\emph{Mathematica}}, \eqref{j=0}-\eqref{j=3} can all be computed by executable program.
Finally, we calculate the following results:
\begin{align*}
& m(d_{1,0})-m(d_{0,1})=\fnum{11576781275735},\\
& m(d_{2,0})-m(d_{1,1})=\fnum{34356628415559239284},\\
& m(d_{1,1})-m(d_{0,2})=\fnum{34347842980758828832}.
\end{align*}
More generally,
\begin{equation*}
 m(d_{\lambda+1,s-\lambda-1})-m(d_{\lambda,s-\lambda})>
\left\{
\begin{array}{rl}
3148, & 0\leqslant \lambda <s,\\
4\times 10^{24}, & 0\leqslant \lambda <s, s\geqslant 3.
\end{array}
\right.
\end{equation*}
Furthermore, the outputs of the following two cases in {\bf\emph{Mathematica}} program are false,
\begin{equation*}
 m(d_{\lambda+1,s-\lambda-1})-m(d_{\lambda,s-\lambda})<
\left\{
\begin{array}{rl}
3148, & 0\leqslant \lambda <s,\\
4\times 10^{24}, & 0\leqslant \lambda <s, s\geqslant 3.
\end{array}
\right.
\end{equation*}
Thus, it is clear that, with any fixed $s\geqslant 1, s>\lambda\geqslant 0$, $m(d_{\lambda,s-\lambda})$ form a strictly monotonously increasing sequence for $\lambda$. Hence, the Proposition follows.
\end{proof}

\section{Mathematica Code and outputs}

In this section, {\bf\emph{Mathematica}} code and outputs that are designed to evaluate the inequality in Proposition \ref{Pro-mis} are attached in a notebook(.{\it nb} format).

\newpage

\includegraphics[trim = 25mm 0mm 0mm 38mm,scale=.9]{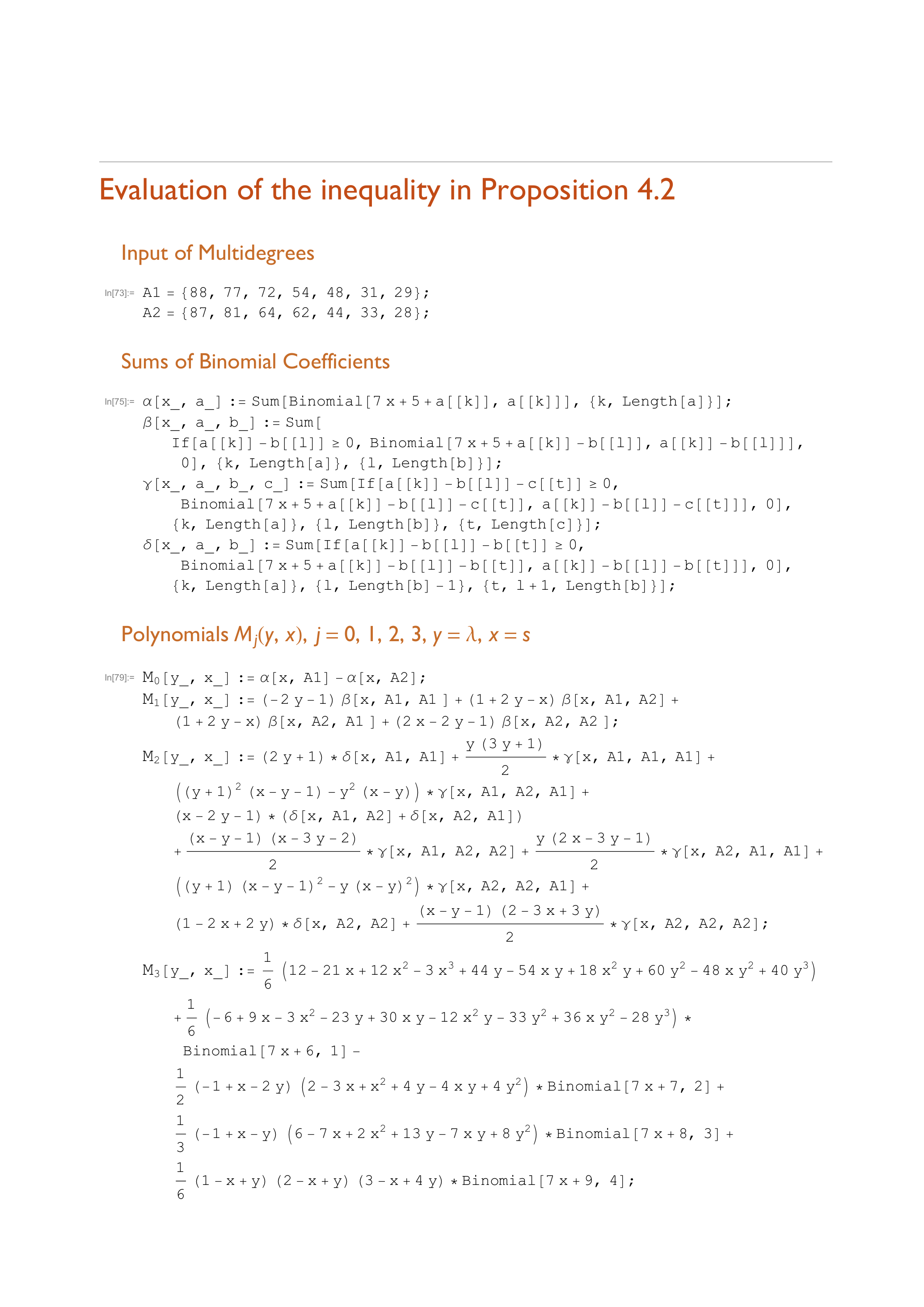}

\includegraphics[trim = 25mm 0mm 0mm 35mm,scale=.9]{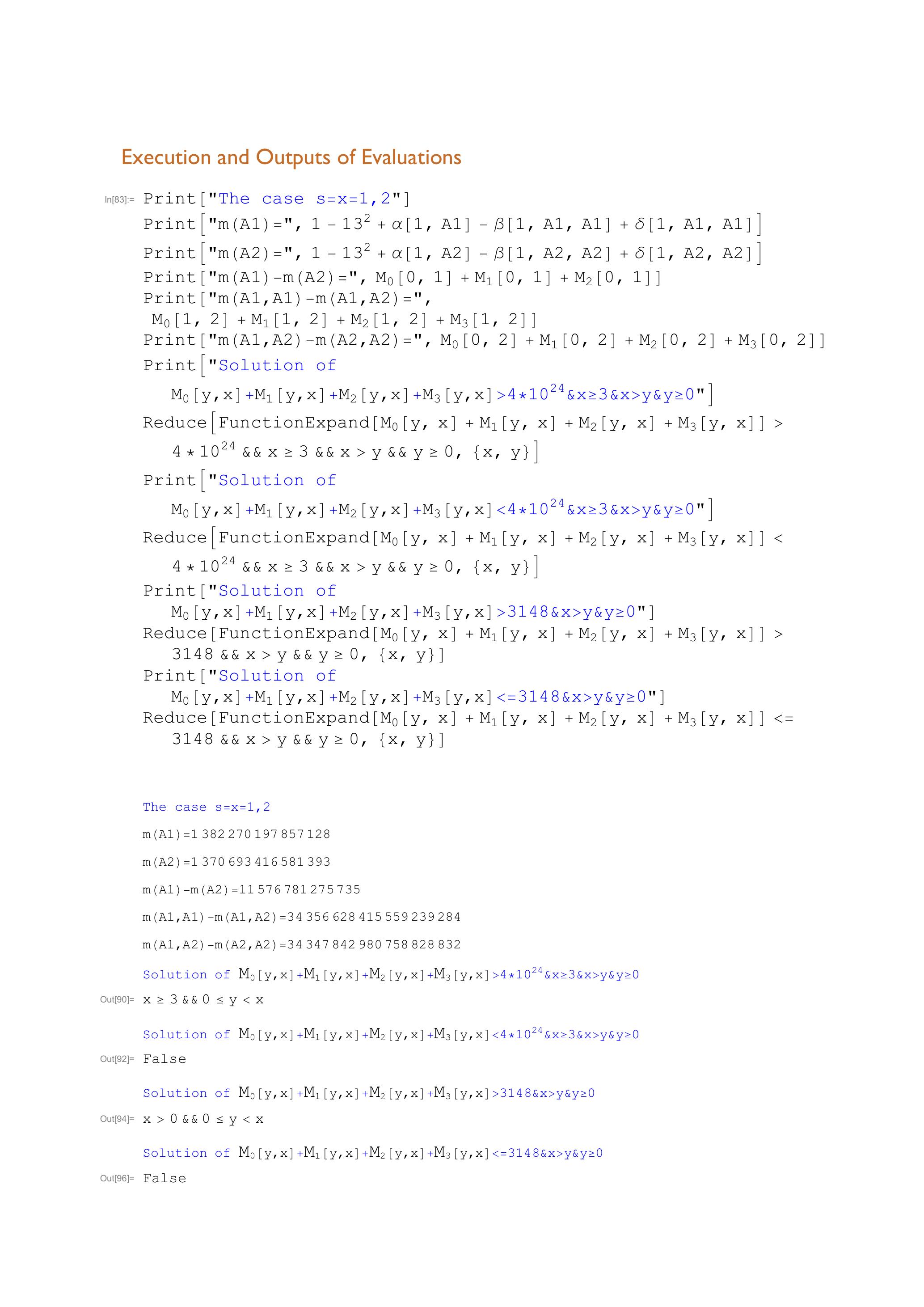}


\begin{thebibliography}{00}
\bigskip
\bibitem{B}
\textit{P. Br{\"u}ckmann}, A remark on moduli spaces of complete intersections, J. reine angew. Math. \textbf{476} (1996), 209--215.
\bibitem{B00}\textit{P. Br{\"u}ckmann}, A remark on moduli spaces of 4-dimensional complete  intersections, J. reine angew. Math. \textbf{525} (2000), 213--217.
\bibitem{E} \textit{W. Ebeling}, An example of two homeomorphic, nondiffeomorphic complete intersection surfaces,
Invent. Math. \textbf{99}(3) (1990), 651--654.
\bibitem{F} \textit{F. Q. Fang}, Topology of complete intersections,
Comment. Math. Helv. \textbf{72} (1997), 466--480.
\bibitem{FK} \textit{F. Q. Fang} and \textit{S. Klaus}, Topological classification of
4-dimensional complete intersections, Manuscript Math. \textbf{90} (1996),
139--147.
\bibitem{FW} \textit{F. Q. Fang} and \textit{J. B. Wang}, Homeomorphism classification of complex projective complete intersections of dimensions 5,6 and 7, Math. Z. \textbf{266} (2010), 7919--746.
\bibitem{Kr} \textit{M. Kreck}, Surgery and duality, Ann. of Math. \textbf{149}(3) (1999), 707--754.
\bibitem{LW2}\textit{A. S. Libgober} and \textit{J. W. Wood}, Differentiable structures on
complete intersections I, Topology. {\bf 21} (1982), 469--482.
\bibitem{LW3}\textit{A. S. Libgober} and \textit{J. W. Wood}, Differentiable structures on
complete intersections II, Singularities, Proc. Symp. Math. \textbf{40}, Part 2, Amer. Math. Soc., Providence, RI (1983), 123--133.
\bibitem{LW4} \textit{A. S. Libgober} and \textit{J. W. Wood},
Remarks on moduli spaces of complete intersections. Contemp. Math. {\bf 58} (1986), 183--194.
\bibitem{LW5} \textit{A. S. Libgober} and \textit{J. W. Wood},
Uniqueness of the complex structure on K\"ahler manifolds of certain homotopy types, J. Differential Geom. \textbf{32}(1) (1990),
139--154.
\bibitem{Tr} \textit{C. Traving}, Klassification vollst$\ddot{a}$ndiger
Durchschnitte, Diplomarbeit, University of Mainz, 1985. Available at
\url{ http://www.mfo.de/Staff/traving.pdf}


\end{thebibliography}
\end{document}